\newcommand{\doublespacinggg}{\let\CS=\@currsize\renewcommand{\baselinestretch}{1.55}\tiny\CS}
\newcommand{\doublespacingg}{\let\CS=\@currsize\renewcommand{\baselinestretch}{1.75}\tiny\CS}
\newtheorem{thm}{Theorem}[section]
 \newtheorem{cor}[thm]{Corollary}
   \newtheorem{examp}[thm]{Example}
  \newtheorem{defin}[thm]{Definition}
  \newtheorem{prop}[thm]{Proposition}
\newcommand{\be}{\begin{equation}}
\newcommand{\ee}{\end{equation}}
\newcommand{\bea}{\begin{eqnarray}}
\newcommand{\eea}{\end{eqnarray}}
\newcommand{\bee}{\begin{eqnarray*}}
\newcommand{\eee}{\end{eqnarray*}}
\title{On A Generalization of Weak Armendariz Rings}
\author{ \small Mahboubeh Sanaei $^{*}$,  
Shervin Sahebi $^{**}$ and Hamid H. S. Javadi $^{***}$\\ $^{*,**}$ 
Department of Mathematics, Islamic Azad University,\\
Central Tehran Branch, 13185/768, Iran, \\email: sahebi@iauctb.ac.ir; mah.sanaei.sci@iauctb.ac.ir
\\ 
$^{***}$  Department of Mathematics and Computer Science, Shahed University,\\ Tehran, Iran, email: h.s.javadi@shahed.ac.ir.\\}
\begin{document}

\date{}
\maketitle \noindent \vspace{-.8cm}

\doublespacingg

\begin{center}
\begin{minipage}{11cm} \footnotesize { \textsc{Abstract:}
We introduce the notion of J-Armendariz rings, 
which are a generalization of weak Armendariz rings and investigate their properties. 
We show that any local ring is J-Armendariz, and then find a local ring that is not weak Armendariz.
Moreover, we prove that a ring $ R $ is J-Armendariz if and only if the 
$ n $-by-$ n $ upper triangular matrix ring 
$ T_{n}(R) $
is J-Armendariz. 
For a ring $R$ and for some $e^{2}=e\in R$,
 we show that if $R $ is an abelian ring, 
then $R$ is J-Armendariz if and only if $eRe$ is J-Armendariz.
Also if the polynomial ring $ R[x] $ is J-Armendariz, 
then it is proven that the Laurent polynomial ring $R[x,x^{-1}]$ is J-Armendariz.
  }
 \end{minipage}
\end{center}

 \vspace*{.4cm}

 \noindent {\footnotesize {\bf Mathematics Subject Classification 2010: 16U20, 16S36,16W20}  \\
 {\bf Keywords:} Armendariz Ring; Weak Armendariz Ring; J-Armendariz Ring  }

\vspace*{.2cm}

\doublespacing

\section{Introduction} 
Throughout this article, $ R $ denotes an associative ring with identity. 
For a ring $R$, $ Nil(R) $ 
denotes the set of nilpotents elements in
 $ R $.  In 1997, Rege and Chhawchharia introduced the notion of an Armendariz ring. 
They called a ring $ R $ an Armendariz ring if whenever polynomials 
$ f(x)=a_{0}+a_{1}x+\cdots +a_{n}x^{n}$
 and $ g(x)=b_{0}+b_{1}x+\cdots +b_{m}x^{m} \in R[x]$
 satisfy $ f(x)g(x)=0 $ then $ a_{i}b_{j} =0$ for all $ i$ and $j $. 
The name "Armendariz ring" is chosen because Armendariz  
\cite[Lemma 1]{EP}
 has been shown that reduced ring (that is a ring without nonzero nilpotent) saisfies this condition. 
A number of properties of the Armendariz rings have been studied in
 \cite{a02, EP, javadi, a03, a04, Reg}. 
 So far Armendariz rings are generalized in several forms. 
 A generalization of Armendariz rings has been investigated in \cite{alpha}
Liu and Zhao \cite{zhao} called a ring $R$ weak Armendariz if whenever polynomials 
 $f(x)=a_{0}+a_{1}x+\cdots + a_{n}x^{n}$,
 $g(x)=b_{0}+b_{1}x+\cdots + b_{m}x^{m} \in R[x]$
satisfy $f(x)g(x)=0$, then 
$a_{i}b_{j}\in Nil(R)$
 for all $i$ and $j$.  
Recall that the Jacobson radical of a ring $ R $, 
is defined to be the intersection of all the maximal left ideals of $ R $. 
We use $ J(R) $ to denote the Jacobson radical of $ R $.
We call a ring $ R $, \textit{J-Armendariz} if 
whenever polynomials $ f(x)=a_{0}+a_{1}x+\cdots +a_{n}x^{n}$ 
and $ g(x)=b_{0}+b_{1}x+\cdots +b_{m}x^{m} \in R[x]$ satisfy 
$ f(x)g(x)=0 $ then $ a_{i}b_{j} \in J(R)$ for all $ i$ and $j $.
Clearly, weak Armendariz rings are J-Armendariz. Moreover, for an artinian ring, 
weak Armendariz rings and J-Armendariz rings are the same.
But, there exist a J-Armendariz ring that are not weak Armendariz.
Thus J-Armendariz rings are a proper generalization of weak Armendariz rings. 
Furthermore, we prove that the local rings are J-Armendariz. 
Then we give an example to show that Local rings are not weak Armendariz in general.
\section{J-Armendariz rings}
 \noindent
In this section J-Armendariz rings are introduced as a generalization of weak Armendariz ring. 
\begin{defin}\label{def 1.1}  
A ring $ R $ is said to be J-Armendariz if for any nonzero polynomial  $ f(x)=\sum_{i=0}^{n}a_{i}x^{i}$ and $g(x)= \sum_{j=0}^{m}b_{j}x^{j} \in R[x] ,  $ $ f(x)g(x)=0 $, implies that
 $a_{i}b_{j} \in J(R)$ for each $i,j $.
\end{defin}
Clearly, any Armendariz ring and weak Armendariz ring is J-Armendariz.
In the following, we will see that the J-Armendariz rings are not nescessary weak Armendariz.
\begin{examp}
 Let $ A $ be the 3 by 3 full matrix ring over the power series ring $ F[[t]] $ over a field $ F $. Let
\begin{center}
$ B=\lbrace M=(m_{ij}) \in A | m_{ij}\in tF[[t]] ~for~ 1\leq i,j\leq 2 ~ and~ m_{ij}=0 ~for~ i=3 ~or~ j=3\rbrace$ 
\end{center}
\begin{center}
$ C=\lbrace M=(m_{ij}) \in A | m_{ii}\in F ~ and~ m_{ij}=0 ~for~ i\neq j\rbrace $. 
\end{center}
Let $ R $ be the subring of $ A $ generated by $ B $ and $ C $. Let $ F=\mathbb{Z}_{2} $.
Note that every element of $ R $ is of the form
$ \Bigl(\begin{smallmatrix}
  a +f_{1}& f_{2} & 0 \\
  f_{3} & a+f_{4} & 0 \\
  0 &0 & a
\end{smallmatrix}\Bigr)$ for some $ a\in F $ and $ f_{i} \in tF[[t]]~(i=1,2,3,4)$ and $ J(R)= tR $.
Let
\begin{center}
$ f(x)=\sum_{i=0}^{n}\Bigl(\begin{smallmatrix}
  a_{i} +f_{1_{i}}& f_{2_{i}} & 0 \\
  f_{3_{i}} & a_{i}+f_{4_{i}} & 0 \\
  0 &0 & a_{i}
\end{smallmatrix} \Bigr)x^{i} $
and
$ g(x)=\sum_{j=0}^{m}\Bigl(\begin{smallmatrix}
  b_{j} +g_{1_{j}}& g_{2_{j}} & 0 \\
  g_{3_{j}} & b_{i}+g_{4_{j}} & 0 \\
  0 &0 & b_{j}
\end{smallmatrix}\Bigr)x^{j} \in R[x] $.
\end{center} Assume that 
$ f(x)g(x)=0 $. Then $ a_{i}b_{j}=0 $ for all $ i $ and $ j $ and so
\begin{center}
$\Bigl(\begin{smallmatrix}
  a_{i} +f_{1_{i}}& f_{2_{i}} & 0 \\
  f_{3_{i}} & a_{i}+f_{4_{i}} & 0 \\
  0 &0 & a_{i}
\end{smallmatrix}\Bigr)\Bigl(\begin{smallmatrix}
  b_{j} +g_{1_{j}}& g_{2_{j}} & 0 \\
  g_{3_{j}} & b_{i}+g_{4_{j}} & 0 \\
  0 &0 & b_{j}
  \end{smallmatrix}\Bigr)\in tR $.
\end{center} 
Hence $ R $ is J-Armendariz.
Now consider two polynomials over $ R $
\begin{center}
$ f(x)=te_{11}+ te_{12}x+te_{21}x^{2}+te_{22}x^{3}$, 
$ g(x)= -t(e_{21}+e_{22})+ t(e_{11}+e_{12})x$. 
\end{center}
Then $ f(x)g(x)=0 $, but $te_{11}t(e_{21}+e_{22})\notin Nil(R) $, 
and so the ring $ R $ is not weak Armendariz. 
\end{examp}
\begin{prop}\label{proposition1}
Let $ R $ be a ring and $ I  $ an ideal of $ R $ such that $ R/I $ is J-Armendariz. 
If $ I\subseteq J(R) $, then $ R $ is J-Armendariz.
\end{prop}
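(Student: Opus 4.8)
The plan is to reduce everything modulo $I$, apply the hypothesis in the quotient $R/I$, and then pull the conclusion back to $R$ using the relation $J(R/I)=J(R)/I$, which holds precisely because $I\subseteq J(R)$.

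Concretely, first I would take nonzero polynomials $f(x)=\sum_{i=0}^{n}a_{i}x^{i}$ and $g(x)=\sum_{j=0}^{m}b_{j}x^{j}$ in $R[x]$ with $f(x)g(x)=0$. Let $\pi\colon R\to R/I$ be the canonical surjection, and write $\pi$ also for its coefficientwise extension $R[x]\to (R/I)[x]$, which is a ring homomorphism (equivalently, $(R/I)[x]\cong R[x]/I[x]$). Applying $\pi$ to $f(x)g(x)=0$ yields $\overline{f}(x)\,\overline{g}(x)=0$ in $(R/I)[x]$, where $\overline{f}(x)=\sum_{i=0}^{n}\pi(a_{i})x^{i}$ and $\overline{g}(x)=\sum_{j=0}^{m}\pi(b_{j})x^{j}$. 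Since $R/I$ is J-Armendariz, we get $\pi(a_{i})\pi(b_{j})=\pi(a_{i}b_{j})\in J(R/I)$ for all $i$ and $j$. The degenerate case in which $f(x)$ or $g(x)$ is zero is trivial and can be dismissed at the outset.

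Next I would invoke the elementary fact that $I\subseteq J(R)$ implies $J(R/I)=J(R)/I$: every maximal left ideal of $R$ contains $J(R)$ and hence contains $I$, so the maximal left ideals of $R/I$ are exactly the images of the maximal left ideals of $R$, and intersecting them gives $J(R/I)=J(R)/I$. Consequently $\pi^{-1}\big(J(R/I)\big)=J(R)$, so from $\pi(a_{i}b_{j})\in J(R/I)$ we conclude $a_{i}b_{j}\in J(R)$ for all $i,j$, which is exactly the J-Armendariz condition for $R$.

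The only point needing any care is the radical identity $J(R/I)=J(R)/I$ for $I\subseteq J(R)$; everything else is a formal transfer along the projection $R[x]\to (R/I)[x]$. In particular, the main obstacle, such as it is, is to use this identity in the correct direction: it is the inclusion $J(R/I)\subseteq J(R)/I$, equivalently $\pi^{-1}(J(R/I))\subseteq J(R)$, that carries the weight of the argument, and it is here that the hypothesis $I\subseteq J(R)$ is essential.
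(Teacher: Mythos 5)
Your proposal is correct and follows essentially the same route as the paper's own proof: pass to the quotient $(R/I)[x]$, apply the J-Armendariz hypothesis there, and pull back via $J(R/I)=J(R)/I$. The only difference is that you spell out the justification for that radical identity, which the paper leaves implicit.
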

\begin{proof}
Suppose that
$ f(x)=a_{0}+a_{1}x+a_{2}x^{2}+\cdots +a_{n}x^{n} $ and 
$ g(x)=b_{0}+b_{1}x+b_{2}x^{2}+\cdots +b_{m}x^{m}  $ are polynomials in
$ R[x] $
 such that 
$ f(x)g(x)=0 $.
This implies
\begin{center}
$ (\bar{a_{0}}+\bar{a_{1}}x+\bar{a_{2}}x^{2}+\cdots +\bar{a_{n}}x^{n})
(\bar{b_{0}}+\bar{b_{1}}x+\bar{b_{2}}x^{2}+\cdots +\bar{b_{m}}x^{m})=\bar{0} $, 
\end{center}
in $ R/I $.
Thus
$ \bar{a_{i}}\bar{b_{j}} \in J(R/I)$, 
And so
$ a_{i} b_{j}\in J(R)$. This means that 
$ R $
is a J-Armendariz ring.
\end{proof}
\begin{cor}\label{corrolary1}
Let $ R $ be any local ring. Then $ R $ is J-Armendarz.
\end{cor}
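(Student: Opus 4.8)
The plan is to apply Proposition \ref{proposition1} with the choice $I = J(R)$. Since $R$ is local, its Jacobson radical $J(R)$ coincides with the unique maximal left ideal of $R$, and the quotient $R/J(R)$ is a division ring. Trivially $J(R) \subseteq J(R)$, so the only thing that remains is to verify the hypothesis of the Proposition, namely that $R/J(R)$ is J-Armendariz.

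To this end I would first show that every division ring $D$ is J-Armendariz; in fact it satisfies the original Armendariz condition. The key point is that $D[x]$ is a domain: if $f(x) = a_{0} + \cdots + a_{n} x^{n}$ and $g(x) = b_{0} + \cdots + b_{m} x^{m}$ are nonzero with $a_{n} \neq 0 \neq b_{m}$, then the coefficient of $x^{n+m}$ in $f(x)g(x)$ equals $a_{n} b_{m}$, which is nonzero because a division ring has no zero divisors. Hence $f(x)g(x) = 0$ forces $f(x) = 0$ or $g(x) = 0$, and in either case $a_{i} b_{j} = 0$ for all $i, j$; since $J(D) = 0$ for a division ring, this gives $a_{i} b_{j} \in J(D)$, so $D$ is J-Armendariz.

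Putting these together, $R/J(R)$ is a division ring, hence J-Armendariz by the previous paragraph, and $J(R) \subseteq J(R)$, so Proposition \ref{proposition1} immediately yields that $R$ is J-Armendariz. There is essentially no obstacle here; the only point that requires a line of argument is the elementary verification that $D[x]$ has no zero divisors, which follows from the leading-coefficient computation above (note that one cannot simply invoke Armendariz's lemma \cite[Lemma 1]{EP} directly, since $D$ need not be commutative, but the degree argument covers this case trivially).
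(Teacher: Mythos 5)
Your proposal is correct and follows exactly the route the paper intends: the corollary is stated immediately after Proposition \ref{proposition1}, and the implicit argument is precisely to take $I=J(R)$, note that $R/J(R)$ is a division ring for a local ring $R$, and observe that a division ring is (J-)Armendariz because its polynomial ring is a domain. Your leading-coefficient verification that $D[x]$ has no zero divisors is a sound and complete way to fill in the one detail the paper leaves unstated.
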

One may ask if local rings are weak Armendariz, but the following gives a negative answer.
\begin{examp}\label{examp1}
 Let $ F $ be a field, $R= M_{2}(F) $ and $ R_{1}=R[[t]] $. Consider the ring
\begin{center}
  $ S =\lbrace \sum_{i=0}^{\infty}a_{i}t^{i}\in R_{1}| a_{0}\in  kI~ for ~k\in F   \rbrace$, 
\end{center}
where $ I $ is the identity matrix over $ F $.
It is obvious that $ S $ is local and so is J-Armendariz. Now for
$ f(x)=e_{11}t-e_{12}tx $
and
$ g(x)=e_{21}t +e_{11}tx \in S[x]$, we have 
$ f(x)g(x)=0 $, but 
$ (e_{11}t)^{2} $ is not nilpotent in $ S $, and so $ S $ is not weak Armendariz.
\end{examp}
\begin{thm}\label{theorem0}
Let $R_{t} $ be a ring, for each $ t\in I $. Then any direct product of rings
$\prod_{t\in I}R_{t} $, is J-Armendariz if and only if any $ R_{t} $ is J-Armendariz. 
\end{thm}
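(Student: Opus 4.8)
The plan is to reduce the whole statement to the elementary fact that the Jacobson radical of a direct product splits as a product, namely
$J\left(\prod_{t\in I}R_{t}\right)=\prod_{t\in I}J(R_{t})$.
This follows from the unit-theoretic description of the radical: $x=(x_{t})_{t\in I}$ lies in the Jacobson radical exactly when $1-axb$ is a unit for every $a,b$, and since units (and hence this condition) are detected coordinatewise in a direct product, $x\in J\!\left(\prod R_{t}\right)$ is equivalent to $x_{t}\in J(R_{t})$ for every $t$. I would record this as a preliminary observation and then split into the two implications.

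For the ``if'' direction, assume each $R_{t}$ is J-Armendariz, set $R=\prod_{t\in I}R_{t}$, and take nonzero $f(x)=\sum_{i=0}^{n}a_{i}x^{i}$, $g(x)=\sum_{j=0}^{m}b_{j}x^{j}\in R[x]$ with $f(x)g(x)=0$. Writing $a_{i}=(a_{i,t})_{t}$ and $b_{j}=(b_{j,t})_{t}$, the $t$-th coordinate polynomials $f_{t}(x)=\sum a_{i,t}x^{i}$ and $g_{t}(x)=\sum b_{j,t}x^{j}$ satisfy $f_{t}(x)g_{t}(x)=0$ in $R_{t}[x]$, because multiplication in $R[x]$ is coordinatewise. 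Applying J-Armendarizness of each $R_{t}$ gives $a_{i,t}b_{j,t}\in J(R_{t})$ for all $i,j$ and all $t$, so $a_{i}b_{j}=(a_{i,t}b_{j,t})_{t}\in\prod_{t}J(R_{t})=J(R)$, as needed.

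For the ``only if'' direction, fix $s\in I$, assume $R$ is J-Armendariz, and take nonzero $f(x)=\sum_{i=0}^{n}a_{i}x^{i}$, $g(x)=\sum_{j=0}^{m}b_{j}x^{j}\in R_{s}[x]$ with $f(x)g(x)=0$. I would lift each coefficient to $R$ by letting $\tilde a_{i}$ be $a_{i}$ in coordinate $s$ and $0$ in all others, and similarly $\tilde b_{j}$. Then $\tilde f(x)=\sum\tilde a_{i}x^{i}$ and $\tilde g(x)=\sum\tilde b_{j}x^{j}$ are nonzero elements of $R[x]$ whose product is $f(x)g(x)=0$ in coordinate $s$ and $0$ in every other coordinate, hence $\tilde f(x)\tilde g(x)=0$. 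J-Armendarizness of $R$ yields $\tilde a_{i}\tilde b_{j}\in J(R)=\prod_{t}J(R_{t})$, and reading off coordinate $s$ (where $\tilde a_{i}\tilde b_{j}$ is exactly $a_{i}b_{j}$) gives $a_{i}b_{j}\in J(R_{s})$. So $R_{s}$ is J-Armendariz.

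The only step that is not pure bookkeeping is the radical identity $J\!\left(\prod R_{t}\right)=\prod J(R_{t})$, so that is where I would be most careful; it is standard, but it is the hinge of both directions. I would also check the ``nonzero polynomial'' clause of Definition~\ref{def 1.1}: in the forward direction it is inherited automatically, and in the backward direction the zero-padded lifts of nonzero polynomials remain nonzero, so the hypothesis on $R$ genuinely applies.
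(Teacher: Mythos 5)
Your proposal is correct and follows essentially the same route as the paper's own proof: coordinatewise decomposition for the forward direction, zero-padded lifts for the converse, and the identity $J\bigl(\prod_{t}R_{t}\bigr)=\prod_{t}J(R_{t})$ as the hinge. The only difference is that you justify that radical identity explicitly (via the unit-theoretic characterization), whereas the paper simply asserts it.
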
\label{theorem1}
\begin{proof}
Suppose that
 $R_{t}  $ is J-Armendariz, for each $ t\in I  $ and 
$ R=\prod_{t\in I}R_{t} $. Let 
$ f(x)g(x)=0 $ for some polynomials
$ f(x)=a_{0}+a_{1}x+a_{2}x^{2}+\cdots +a_{n}x^{n} $, 
$ g(x)=b_{0}+b_{1}x+b_{2}x^{2}+\cdots +b_{m}x^{m} \in R[x] $,
where 
$ a_{i} =(a_{i_{1}},a_{i_{2}},\cdots , a_{i_{t}},\cdots)$,
$ b_{j} =(b_{j_{1}},b_{j_{2}},\cdots, b_{j_{t}},\cdots ) $
are elements of the product ring 
$ R $ for 
$ 1\leq i\leq n $ and 
$ 1\leq j \leq m $.
Define
\begin{center}
$ f_{t}(x)=a_{0_{t}}+a_{1_{t}}x+a_{2_{t}}x^{2}+\cdots +a_{n_{t}}x^{n} $,
$g_{t}(x)=b_{0_{t}}+b_{1_{t}}x+b_{2_{t}}x^{2}+\cdots +b_{m_{t}}x^{m}  $.
\end{center}
From 
$ f(x)g(x)=0 $,
we have 
$ a_{0}b_{0}=0, a_{0}b_{1}+a_{1}b_{0}=0,\cdots a_{n}b_{m}=0$, and this implies
  \begin{eqnarray*}
 a_{0_{1}}b_{0_{1}}=a_{0_{2}}b_{0_{2}}=\cdots =a_{0_{t}}b_{0_{t}}=\cdots =0 \\
a_{0_{1}}b_{1_{1}}+a_{1_{1}}b_{0_{1}}=a_{0_{2}}b_{1_{2}}+a_{1_{2}}b_{0_{2}}=\cdots =a_{0_{t}}b_{1_{t}}+a_{1_{t}}b_{0_{t}}=\cdots=0 \\
  a_{n_{1}}b_{m_{1}}= a_{n_{2}}b_{m_{2}}=\cdots=a_{n_{t}}b_{m_{t}}=\cdots =0
 \end{eqnarray*}
This means that 
 $ f_{t}(x)g_{t}(x) =0$ in 
 $ R_{t}[x] $, for each
 $ t\in I $. Since
 $ R_{t} $ is J-Armendariz for each $ t\in I $, then 
 $ a_{i_{t}}b_{j_{t}}\in J(R_{t}) $. Now the equation 
 $  \prod_{t\in I}J(R_{t})= J(\prod_{t\in I}R_{t})$, implies that 
 $ a_{i}b_{j} \in J(R) $, and so $ R  $ is J-Armendariz.
 Conversely, assume that 
$ R=\prod_{t\in I} R_{t}$ is J-Armendariz and
$ f_{t}(x)g_{t}(x) =0$
for some polynomials
$ f_{t}(x)=a_{0_{t}}+a_{1_{t}}x+a_{2_{t}}x^{2}+\cdots +a_{n_{t}}x^{n} $,
$g_{t}(x)=b_{0_{t}}+b_{1_{t}}x+b_{2_{t}}x^{2}+\cdots +b_{m_{t}}x^{m} \in R_{t}[x] $, 
with $ t\in I $.
Define 
$ F(x)=a_{0}+a_{1}x+a_{2}x^{2}+\cdots +a_{n}x^{n} $, 
$ G(x)=b_{0}+b_{1}x+b_{2}x^{2}+\cdots +b_{m}x^{m} \in R[x] $, where
$ a_{i} =(0,\cdots ,0, a_{i_{t}},0,\cdots)$,
$ b_{j} =(0,\cdots,0, b_{j_{t}},0,\cdots) \in R$. Since 
$ f_{t}(x)g_{t}(x)=0 $, we have 
$ F(x)G(x)=0 $. $ R $ is J-Armendariz, so
$ a_{i}b_{j}\in J(R) $. Therefore
$ a_{i_{t}}b_{j_{t}}\in J(R_{t}) $ and so $ R_{t} $ is J-Armendariz for each $ t\in I $.
 \end{proof}
The following example shows that for an Armendariz ring $R  $, 
every full $ n $-by-$ n $ matrix ring $ M_{n}(R) $ over $ R $ need not to be J-Armendariz.
\begin{examp}\label{example2}
Let 
$ F $ be a field and  
$ R=M_{2}(F) $. If
$ f(x)= \bigl( \begin{smallmatrix}
  0&1\\ 0&0
\end{smallmatrix} \bigr)-\bigl( \begin{smallmatrix}
  1&0\\ 0&0
\end{smallmatrix} \bigr) x $
and 
$ g(x)= \bigl( \begin{smallmatrix}
  1&1\\ 0&0
\end{smallmatrix} \bigr)+\bigl( \begin{smallmatrix}
  0&0\\ -1&-1
\end{smallmatrix} \bigr) x $,
then
$ f(x)g(x)=0 $. But 
$  \bigl( \begin{smallmatrix}
  1&0\\ 0&0
\end{smallmatrix} \bigr)\bigl( \begin{smallmatrix}
  1&1\\ 0&0
\end{smallmatrix} \bigr)=\bigl( \begin{smallmatrix}
  1&1\\ 0&0
\end{smallmatrix} \bigr)$ 
is not in $ J(R) $. Thus $ R $ is not J-Armendariz.
\end{examp}
Let $ R $ and $ S $ be two rings and $ M $ be an $ (R,S) $-bimodule. 
This means that $ M $ is a left $ R $-module and a right $ S $-module such that 
$ (rm)s=r(ms) $
for all $ r\in R $, $ m\in M $, and $ s\in S $. Given such a bimodule $ M $ we can form 
\begin{center}
$  T =\bigl( 
\begin{smallmatrix}
  R&M\\ 0&S
\end{smallmatrix} \bigr)=\big\lbrace \bigl( \begin{smallmatrix}r&m\\ 0&s \end{smallmatrix} \bigr)
 :  r\in R , m\in M , s\in S \big\rbrace$
\end{center} 
and define a multiplication on $ T  $ by using formal matrix multiplication:
\begin{center}
$ \bigl(
 \begin{smallmatrix}
  r&m\\ 0&s
\end{smallmatrix} \bigr)\bigl(
 \begin{smallmatrix}
  r^{\prime}&m^{\prime}\\ 0&s^{\prime}
\end{smallmatrix} \bigr)=\bigl(
 \begin{smallmatrix}
  rr^{\prime}&rm^{\prime}+ms^{\prime}\\ 0&ss^{\prime}
\end{smallmatrix} \bigr). $
\end{center}
This ring construction is called triangular ring $ T $. 
\begin{prop}\label{proposition2}
Let $ R $ and $ S $ be two rings and $ T $ be the triangular ring 
$
T=
\bigl( \begin{smallmatrix}
  R&M\\ 0&S
\end{smallmatrix} \bigr) 
$
(where $ M $ is an $ (R,S) $-bimodule). 
Then the rings $ R $ and $ S $ are J-Armendariz if and only if $ T $ is J-Armendariz. 
\end{prop}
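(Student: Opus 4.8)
The plan rests on the standard description of the Jacobson radical of a triangular ring, namely $J(T)=\bigl(\begin{smallmatrix} J(R)&M\\ 0&J(S)\end{smallmatrix}\bigr)$. I would first record this fact: $\bigl(\begin{smallmatrix} J(R)&M\\ 0&J(S)\end{smallmatrix}\bigr)$ is a two-sided ideal of $T$ all of whose elements are quasi-regular (one solves for the inverse entrywise, using that $1+j$ is a unit of $R$ whenever $j\in J(R)$, and likewise for $S$), while $T/\bigl(\begin{smallmatrix} J(R)&M\\ 0&J(S)\end{smallmatrix}\bigr)\cong R/J(R)\times S/J(S)$ has zero Jacobson radical; these two observations pin down $J(T)$. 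Once this is in hand, both implications reduce to bookkeeping with $2\times2$ formal matrices.

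For the direction ``$T$ J-Armendariz $\Rightarrow$ $R$ and $S$ J-Armendariz'', I would take $f(x)=\sum a_ix^i$, $g(x)=\sum b_jx^j\in R[x]$ with $f(x)g(x)=0$, form $F(x)=\sum\bigl(\begin{smallmatrix} a_i&0\\ 0&0\end{smallmatrix}\bigr)x^i$ and $G(x)=\sum\bigl(\begin{smallmatrix} b_j&0\\ 0&0\end{smallmatrix}\bigr)x^j$ in $T[x]$, and note $F(x)G(x)=0$ because $\sum_{i+j=k}a_ib_j=0$ for all $k$. J-Armendarizness of $T$ then gives $\bigl(\begin{smallmatrix} a_ib_j&0\\ 0&0\end{smallmatrix}\bigr)\in J(T)$, hence $a_ib_j\in J(R)$ by the displayed formula; so $R$ is J-Armendariz, and the symmetric argument using the lower-right corner $s\mapsto\bigl(\begin{smallmatrix} 0&0\\ 0&s\end{smallmatrix}\bigr)$ handles $S$. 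Here it is cleanest to phrase the passage to $T[x]$ directly in terms of vanishing coefficient convolutions rather than invoking a unital ring homomorphism, since the corner maps are non-unital.

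For the direction ``$R$ and $S$ J-Armendariz $\Rightarrow$ $T$ J-Armendariz'', I would take $F(x)=\sum_i\bigl(\begin{smallmatrix} a_i&m_i\\ 0&c_i\end{smallmatrix}\bigr)x^i$ and $G(x)=\sum_j\bigl(\begin{smallmatrix} b_j&n_j\\ 0&d_j\end{smallmatrix}\bigr)x^j$ in $T[x]$ with $F(x)G(x)=0$. Comparing $(1,1)$-entries gives $\bigl(\sum a_ix^i\bigr)\bigl(\sum b_jx^j\bigr)=0$ in $R[x]$, and comparing $(2,2)$-entries gives $\bigl(\sum c_ix^i\bigr)\bigl(\sum d_jx^j\bigr)=0$ in $S[x]$, whence $a_ib_j\in J(R)$ and $c_id_j\in J(S)$ for all $i,j$. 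Since the product of the $i$-th coefficient of $F$ with the $j$-th coefficient of $G$ equals $\bigl(\begin{smallmatrix} a_ib_j& a_in_j+m_id_j\\ 0& c_id_j\end{smallmatrix}\bigr)$, whose off-diagonal entry lies in $M$ automatically, this product lies in $\bigl(\begin{smallmatrix} J(R)&M\\ 0&J(S)\end{smallmatrix}\bigr)=J(T)$; hence $T$ is J-Armendariz.

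The only substantive ingredient is the formula for $J(T)$; granted that, there is essentially no obstacle, and the one place to be careful is the non-unitality of the corner embeddings noted above. Iterating this scheme specializes to the upper triangular matrix ring statement for $T_n(R)$ mentioned in the introduction.
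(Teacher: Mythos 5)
Your proof is correct and follows essentially the same route as the paper's: both directions hinge on the identity $J(T)=\bigl(\begin{smallmatrix} J(R)&M\\ 0&J(S)\end{smallmatrix}\bigr)$ together with reading off the diagonal entries of the matrix polynomials. The only cosmetic differences are that you sketch a justification of that identity (the paper merely invokes it as a known fact) and that, in the direction deducing J-Armendarizness of $R$ and $S$ from that of $T$, you embed $R$ and $S$ separately via non-unital corner maps, whereas the paper places a zero-product pair from $R[x]$ and one from $S[x]$ simultaneously on the diagonal.
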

\begin{proof}
Let
$ R $ and
$ S $
be J-Armendarz, and 
\begin{center}
$ f(x)=\bigl( \begin{smallmatrix}
  r_{0}&m_{0}\\ 0&s_{0}
\end{smallmatrix} \bigr)  +\bigl( \begin{smallmatrix}
  r_{1}&m_{1}\\ 0&s_{1}
\end{smallmatrix} \bigr)x+\cdots +\bigl( \begin{smallmatrix}
  r_{n}&m_{n}\\ 0&s_{n}
\end{smallmatrix} \bigr)x^{n}  $,
\end{center}
\begin{center}
$ g(x)=\bigl( \begin{smallmatrix}
  r^{\prime}_{0}&m^{\prime}_{0}\\ 0&s^{\prime}_{0}
\end{smallmatrix} \bigr)  +\bigl( \begin{smallmatrix}
  r^{\prime}_{1}&m^{\prime}_{1}\\ 0&s^{\prime}_{1}
\end{smallmatrix} \bigr)x+\cdots +\bigl( \begin{smallmatrix}
  r^{\prime}_{m}&m^{\prime}_{m}\\ 0&s^{\prime}_{m}
\end{smallmatrix} \bigr)x^{m}  \in T[x]$
\end{center} satisfy 
$ f(x)g(x)=0 $.
Define
\begin{center}
$ f_{r}(x) =r_{0}+r_{1}x+\cdots + r_{n}x^{n}$,
$  g_{r}(x) =r^{\prime}_{0}+r^{\prime}_{1}x+\cdots + r^{\prime}_{m}x^{m}\in R[x] $
\end{center}
 and
\begin{center}
$ f_{s}(x) =s_{0}+s_{1}x+\cdots + s_{n}x^{n}  $,
$  g_{s}(x) =s^{\prime}_{0}+s^{\prime}_{1}x+\cdots + s^{\prime}_{m}x^{m} \in S[x]. $
\end{center}
From 
$ f(x)g(x)=0 $, we have 
$ f_{r}(x)g_{r}(x)= f_{s}(x)g_{s}(x)=0 $, and since $ R $ and $ S $ are J-Armendariz then
$ r_{i}r^{\prime}_{j}\in J(R) $ and $  s_{i}s^{\prime}_{j}\in J(S) $ for each 
$ 1\leq i \leq n $, $ 1\leq j \leq m $.
Now from the fact  
$ J(T)= \bigl( \begin{smallmatrix}
  J(R)&M\\ 0&J(S)
\end{smallmatrix} \bigr) $, we obtain that 
$ \bigl( \begin{smallmatrix}
  r_{i}&m_{i}\\ 0&s_{i}
\end{smallmatrix} \bigr)\bigl( \begin{smallmatrix}
  r^{\prime}_{j}&m^{\prime}_{j}\\ 0&s^{\prime}_{j}
\end{smallmatrix} \bigr)\in J(T) $ for any
$ i,j $. Hence 
$ T $
is a J-Armendariz ring.
Conversely, let $ T $ be a J-Armendariz ring,
$ f_{r}(x) =r_{0}+r_{1}x+\cdots + r_{n}x^{n}$,
$  g_{r}(x) =r^{\prime}_{0}+r^{\prime}_{1}x+\cdots + r^{\prime}_{m}x^{m}\in R[x] $, such that
$ f_{r}(x)g_{r}(x)=0 $, and 
$ f_{s}(x) =s_{0}+s_{1}x+\cdots + s_{n}x^{n}  $,
$  g_{s}(x) =s^{\prime}_{0}+s^{\prime}_{1}x+\cdots + s^{\prime}_{m}x^{m} \in S[x] $, such that
$ f_{s}(x)g_{s}(x)=0 $.
If
\begin{center}
$ f(x)=\bigl( \begin{smallmatrix}
  r_{0}&0\\ 0&s_{0}
\end{smallmatrix} \bigr)  +\bigl( \begin{smallmatrix}
  r_{1}&0\\ 0&s_{1}
\end{smallmatrix} \bigr)x+\cdots +\bigl( \begin{smallmatrix}
  r_{n}&0\\ 0&s_{n}
\end{smallmatrix} \bigr)x^{n}  $
and
$ g(x)=\bigl( \begin{smallmatrix}
  r^{\prime}_{0}&0\\ 0&s^{\prime}_{0}
\end{smallmatrix} \bigr)  +\bigl( \begin{smallmatrix}
  r^{\prime}_{1}&0\\ 0&s^{\prime}_{1}
\end{smallmatrix} \bigr)x+\cdots +\bigl( \begin{smallmatrix}
  r^{\prime}_{m}&0\\ 0&s^{\prime}_{m}
\end{smallmatrix} \bigr)x^{m}  \in T[x]$
\end{center}
Then from
$ f_{r}(x)g_{r}(x)=0  $ and $ f_{s}(x)g_{s}(x)=0  $ it follows that
$ f(x)g(x)=0 $.
Since 
$ T $ is a J-Armendariz ring, 
$ \bigl( \begin{smallmatrix}
  r_{i}&0\\ 0&s_{i}
\end{smallmatrix} \bigr)\bigl( \begin{smallmatrix}
  r^{\prime}_{j}&0\\ 0&s^{\prime}_{j}
\end{smallmatrix} \bigr)\in J (T)=\bigl( \begin{smallmatrix}
  J(R)&0\\ 0&J(S)
\end{smallmatrix} \bigr) $.
Thus
$ r_{i} r^{\prime}_{j}\in J(R)$ and
$ s_{i} s^{\prime}_{j}\in J(S)$ for any $ i,j $. This shows that 
$ R $ and $ S $ are J-Armendariz.
\end{proof}
Given a ring $ R $ and a bimodule $ _{R}M_{R} $, the trivial extension of $ R $ by $ M $ is the ring
 $ T(R,M) =R\bigoplus M$
with the usual addition and the multiplication
$$(r_{1},m_{1})(r_{2},m_{2})=(r_{1}r_{2},r_{1}m_{2}+m_{1}r_{2}).  $$
This is isomorphic to the ring of all matrices 
$
\bigl( \begin{smallmatrix}
  r&m\\ 0&r
\end{smallmatrix} \bigr) 
$,
where 
$ r\in R $
and 
$ m\in M $
and the usual matrix operations are used.
\begin{cor}\label{corollary2}
A ring $ R $ is J-Armendariz if and only if the trivial extension $ T(R,R) $ is a J-Armendariz ring.
\end{cor}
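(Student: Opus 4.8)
The plan is to obtain both implications cheaply by combining Proposition~\ref{proposition1} with the canonical projection $T(R,R)\to R$, rather than re-running the matrix computation of Proposition~\ref{proposition2}; note that $T(R,R)$ is the \emph{proper} subring of $\bigl(\begin{smallmatrix}R&R\\0&R\end{smallmatrix}\bigr)$ consisting of matrices with equal diagonal entries, so Proposition~\ref{proposition2} does not literally apply here. Write elements of $T(R,R)$ as pairs $(r,m)$ with $(r_1,m_1)(r_2,m_2)=(r_1r_2,\,r_1m_2+m_1r_2)$, and put $I=\{(0,m):m\in R\}$. Then $I$ is a two-sided ideal with $I^{2}=0$, hence $I\subseteq J(T(R,R))$, and projection onto the first coordinate induces a ring isomorphism $T(R,R)/I\cong R$. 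These two observations about the trivial extension are the only nontrivial input; once they are recorded, both directions are formal.

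For the ``only if'' direction, assume $R$ is J-Armendariz. Then $T(R,R)/I\cong R$ is J-Armendariz and $I\subseteq J(T(R,R))$, so Proposition~\ref{proposition1} immediately yields that $T(R,R)$ is J-Armendariz.

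For the ``if'' direction, assume $T(R,R)$ is J-Armendariz and let $f(x)=\sum_{i=0}^{n}a_ix^{i}$, $g(x)=\sum_{j=0}^{m}b_jx^{j}\in R[x]$ satisfy $f(x)g(x)=0$. Lift these to $\widetilde f(x)=\sum_{i=0}^{n}(a_i,0)x^{i}$ and $\widetilde g(x)=\sum_{j=0}^{m}(b_j,0)x^{j}$ in $T(R,R)[x]$. Since $(a_i,0)(b_j,0)=(a_ib_j,0)$, the hypothesis $f(x)g(x)=0$ gives $\widetilde f(x)\widetilde g(x)=0$, so by the J-Armendariz property of $T(R,R)$ we get $(a_ib_j,0)=(a_i,0)(b_j,0)\in J(T(R,R))$ for all $i,j$. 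Applying the surjective ring homomorphism $T(R,R)\to R$, $(r,m)\mapsto r$, which carries $J(T(R,R))$ into $J(R)$, we conclude $a_ib_j\in J(R)$ for all $i,j$; hence $R$ is J-Armendariz. The argument contains no real obstacle: the work is entirely in identifying $J(T(R,R))$, which follows from $I^{2}=0$ and $T(R,R)/I\cong R$.
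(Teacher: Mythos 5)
Your proof is correct, and it takes a genuinely different route from the one the paper intends. The paper states Corollary~\ref{corollary2} with no proof, positioning it as a consequence of Proposition~\ref{proposition2}; but as you rightly point out, $T(R,R)$ is the proper subring of $\bigl(\begin{smallmatrix}R&R\\0&R\end{smallmatrix}\bigr)$ with equal diagonal entries, so Proposition~\ref{proposition2} does not apply verbatim and one would have to rerun its coefficient-by-coefficient computation inside the trivial extension (splitting each coefficient into its diagonal part $r_i$ and off-diagonal part $m_i$, showing $f_rg_r=0$ forces $r_ir'_j\in J(R)$, and then using $J(T(R,R))=\{(a,m):a\in J(R)\}$). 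You instead route the forward implication through Proposition~\ref{proposition1}: the square-zero ideal $I=0\oplus R$ is nil, hence sits inside $J(T(R,R))$, and $T(R,R)/I\cong R$, so the quotient criterion applies immediately; the converse is a standard lift along $r\mapsto(r,0)$ followed by pushing $J(T(R,R))$ forward under the surjection onto $R$. Both arguments are sound, but yours buys two things: it closes the small logical gap left by citing Proposition~\ref{proposition2} for a ring it does not literally cover, and it generalizes without change to $T(R,M)$ for an arbitrary bimodule $M$, since nothing in your argument uses $M=R$.
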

\begin{cor}\label{corollary3}
A ring $ R $ is J-Armendariz if and only if, for any $ n $, $ T_{n}(R) $ is J-Armendariz. 
\end{cor}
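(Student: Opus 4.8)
The plan is to induct on $n$, using Proposition~\ref{proposition2} to peel off one row and column at a time. The base case is $n=1$, where $T_{1}(R)=R$ and the statement is a tautology.

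For the inductive step I would fix $n\geq 2$ and identify $T_{n}(R)$ with the triangular ring
\[
T_{n}(R)\;\cong\;\bigl(\begin{smallmatrix} R & M\\ 0 & T_{n-1}(R)\end{smallmatrix}\bigr),
\]
where $M=R^{1\times(n-1)}$ is the set of length-$(n-1)$ row vectors over $R$, viewed as a left $R$-module by scalar multiplication and as a right $T_{n-1}(R)$-module by (row vector)$\times$(matrix) multiplication. One checks that $(rm)A=r(mA)$ for $r\in R$, $m\in M$, $A\in T_{n-1}(R)$, so $M$ is an $(R,T_{n-1}(R))$-bimodule; and writing an upper triangular $n\times n$ matrix as $\bigl(\begin{smallmatrix} r & v\\ 0 & A\end{smallmatrix}\bigr)$ with $r$ the $(1,1)$-entry, $v$ the remainder of the first row, and $A$ the lower-right $(n-1)\times(n-1)$ block, the formal matrix multiplication defining the triangular ring coincides with ordinary matrix multiplication, so the displayed identification is a ring isomorphism.

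Then I would apply Proposition~\ref{proposition2} to this decomposition: $T_{n}(R)$ is J-Armendariz if and only if both $R$ and $T_{n-1}(R)$ are J-Armendariz. By the induction hypothesis, $T_{n-1}(R)$ is J-Armendariz if and only if $R$ is J-Armendariz. Chaining the two equivalences gives that $T_{n}(R)$ is J-Armendariz if and only if $R$ is J-Armendariz, which completes the induction and the proof.

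The argument is almost entirely formal once Proposition~\ref{proposition2} is available; the only place that needs a moment's care is verifying that $M=R^{1\times(n-1)}$ is genuinely an $(R,T_{n-1}(R))$-bimodule and that the triangular-ring product matches honest matrix multiplication, both of which are routine. (Equivalently one could peel off the last row and column, writing $T_{n}(R)\cong\bigl(\begin{smallmatrix} T_{n-1}(R) & N\\ 0 & R\end{smallmatrix}\bigr)$ with $N=R^{(n-1)\times 1}$; the induction runs the same way.)
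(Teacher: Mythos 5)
Your proof is correct and follows exactly the route the paper intends: the corollary is stated without proof as a consequence of Proposition~\ref{proposition2}, and the implicit argument is precisely your induction on $n$ via the decomposition $T_{n}(R)\cong\bigl(\begin{smallmatrix} R & M\\ 0 & T_{n-1}(R)\end{smallmatrix}\bigr)$. Nothing to add.
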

\begin{cor}\label{corollary4}
If $ R $ is a Armendariz ring then, for any $ n $, $ T_{n}(R) $ is a J-Armendariz ring. 
\end{cor}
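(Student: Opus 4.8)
The plan is to deduce this immediately from two facts already in hand. First I would note that every Armendariz ring is J-Armendariz: if $f(x)=\sum_{i=0}^{n}a_{i}x^{i}$ and $g(x)=\sum_{j=0}^{m}b_{j}x^{j}$ in $R[x]$ satisfy $f(x)g(x)=0$, then the Armendariz hypothesis gives $a_{i}b_{j}=0$ for all $i,j$, and since $0\in J(R)$ this is exactly the condition in Definition~\ref{def 1.1}. This implication is recorded right after Definition~\ref{def 1.1}, so there is nothing new to prove here.

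Second, I would invoke Corollary~\ref{corollary3}, which asserts that $R$ is J-Armendariz if and only if $T_{n}(R)$ is J-Armendariz for every $n$; applied to the J-Armendariz ring $R$, it yields that $T_{n}(R)$ is J-Armendariz for all $n\ge 1$, which is the claim. So the whole proof is the one-line chain: $R$ Armendariz $\Rightarrow$ $R$ J-Armendariz $\Rightarrow$ $T_{n}(R)$ J-Armendariz.

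There is no real obstacle here; the content is entirely carried by Corollary~\ref{corollary3}. The one point worth emphasizing is why the conclusion must be phrased with ``J-Armendariz'' rather than ``Armendariz'': for $n\ge 2$ the ring $T_{n}(R)$ generally fails to be Armendariz even over a field. For instance, in $T_{2}(F)$ the polynomials $f(x)=e_{11}+e_{12}x$ and $g(x)=e_{22}-e_{12}x$ satisfy $f(x)g(x)=0$, yet $e_{11}e_{12}=e_{12}\neq 0$; note, however, that $e_{12}\in J(T_{2}(F))$, consistent with the corollary. Thus routing the argument through the J-Armendariz notion is precisely what makes the statement correct, and it illustrates the usefulness of the generalization introduced in this section.
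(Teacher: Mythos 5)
Your proof is correct and is exactly the argument the paper intends: the corollary follows immediately from the observation after Definition~\ref{def 1.1} that Armendariz rings are J-Armendariz together with Corollary~\ref{corollary3}. Your added example in $T_{2}(F)$ showing why the conclusion cannot be strengthened to ``Armendariz'' is accurate and a nice touch, but not needed for the proof itself.
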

Recall that a ring $ R $ is said to be \textit{abelian} if every idempotent of it is central.
Armendariz rings are abelian \cite[Lemma 7]{a04},
but the next example shows that weak Armendariz and J-Armendariz rings need not to be abelian in general.

\begin{examp}\label{abelian}
Let $ F $ be a field. By Corollary \ref{corollary4}, $ R=T_{2}(F) $ is a J-Armendariz ring. We see that
$ \bigl( \begin{smallmatrix}
  0&0\\ 0&1
\end{smallmatrix} \bigr) $ is an idempotent element in $ R $, that is not central. So $ R $ is not an abelian ring.
\end{examp}

\begin{prop}\label{proposition4}
Let $ R $ be a J-Armendariz ring. Then for any idempotent $ e $ of $ R $, $ eRe $ is J-Armendariz. 
The converse holds if $ R $ is an abelian ring.
\end{prop}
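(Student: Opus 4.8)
The plan is to prove the two implications separately; the forward implication is the one with real content and needs no hypothesis on $R$, while the converse is where abelianness enters. For the forward direction, fix an idempotent $e\in R$ and take nonzero polynomials $f(x)=\sum_{i=0}^{n}a_{i}x^{i}$, $g(x)=\sum_{j=0}^{m}b_{j}x^{j}\in (eRe)[x]$ with $f(x)g(x)=0$. Each coefficient lies in $eRe$, so $ea_{i}=a_{i}e=a_{i}$ and $eb_{j}=b_{j}e=b_{j}$. Regarding $f$ and $g$ as polynomials in $R[x]$, the relation $f(x)g(x)=0$ persists there, so the J-Armendariz property of $R$ yields $a_{i}b_{j}\in J(R)$ for all $i,j$. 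Then I would invoke the standard corner-ring identity $J(eRe)=eJ(R)e$: since $a_{i}b_{j}=e(a_{i}b_{j})e$ and $a_{i}b_{j}\in J(R)$, we obtain $a_{i}b_{j}\in eJ(R)e=J(eRe)$, which is precisely the condition needed for $eRe$ to be J-Armendariz.

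For the converse, assume $R$ is abelian, so that every idempotent of $R$ is central. Fix an idempotent $e$; then $1-e$ is central as well, the corners are $eRe=eR$ and $(1-e)R(1-e)=(1-e)R$, and the Peirce decomposition of $R$ collapses (the components $eR(1-e)$ and $(1-e)Re$ vanish by centrality) to a ring direct product $R\cong eRe\times (1-e)R(1-e)$. Applying the hypothesis to both $e$ and $1-e$ shows that the two factors are J-Armendariz, and then Theorem \ref{theorem0} gives that their direct product, namely $R$, is J-Armendariz.

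I expect the main obstacle to be purely the facts about the Jacobson radical under these two constructions rather than any intricate polynomial manipulation: in the forward direction one needs $J(eRe)=eJ(R)e$, and in the converse one needs that a central idempotent splits $R$ as a direct product of corners with $J$ distributing over the factors. Both are standard ring-theoretic facts, so once they are cited the verification is essentially bookkeeping; the only point to be careful about is that the coefficients of polynomials over $eRe$ genuinely satisfy $ea_{i}e=a_{i}$, which is exactly what permits passing back and forth between $eRe$ and $R$.
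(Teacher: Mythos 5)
Your forward direction is essentially the paper's argument: pass from $(eRe)[x]$ to $R[x]$, apply the J-Armendariz hypothesis on $R$, and land back in $J(eRe)$ via the corner identity (the paper writes $J(eRe)=J(R)\cap eRe$ where you write $eJ(R)e$; these are the same set, so that part is fine, and, as you note, it needs no abelian hypothesis).

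The converse is where you diverge from the paper and where the trouble lies. The paper fixes the idempotent $e$, uses centrality to get $ef(x)e\cdot eg(x)e=0$ in $(eRe)[x]$, and concludes from the J-Armendariz property of $eRe$ that the coefficient products lie in $J(eRe)\subseteq J(R)$ --- but this only controls $ea_ib_je$, i.e.\ the $eRe$-component of $a_ib_j$, and says nothing about $(1-e)a_ib_j(1-e)$. Your route via $R\cong eRe\times(1-e)R(1-e)$ and Theorem \ref{theorem0} makes exactly this issue visible, which is a virtue; however, your step ``applying the hypothesis to both $e$ and $1-e$'' is only legitimate if the hypothesis of the converse quantifies over all idempotents. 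If it does (the literal converse of the first sentence of the proposition), your argument is correct but the whole apparatus is unnecessary: taking $e=1$ already gives that $R=1R1$ is J-Armendariz. If instead, as the abstract indicates (``for some $e^{2}=e\in R$''), the converse is meant for a single fixed idempotent, then nothing in the hypothesis tells you that $(1-e)R(1-e)$ is J-Armendariz, and your proof has a genuine gap at that point --- the mirror image of the gap in the paper's own proof, which handles only the $e$-corner. To repair the single-idempotent version one would need an additional argument (or hypothesis) covering the complementary corner $(1-e)R(1-e)$.
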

\begin{proof}
Let
$ f(x)=\sum_{i=0}^{n}a_{i}x^{i} $, 
 $ g(x)=\sum_{j=0}^{m}b_{j}x^{j} \in (eRe)[x] $
be such that 
$ f(x)g(x)=0 $. Since $ R $ is J-Armendariz and 
$ a_{i}, b_{j} \in eRe\subseteq R $, then we have
$ a_{i}b_{j} \in J(R)\cap eRe= J(eRe) $. This means that 
$ eRe $ is J-Armendariz. Conversely, let $ eRe $ be a J-Armendariz ring and
$ f(x)=\sum_{i=0}^{n}a_{i}x^{i} $, 
 $ g(x)=\sum_{j=0}^{m}b_{j}x^{j} \in R[x] $,
such that 
$ f(x)g(x)=0 $.
By the hypothesis, 
$ 0=ef(x)eg(x)e\in (eRe)[x] $, and since $ eRe $ is J-Armendariz, we have 
$ a_{i}b_{j}\in J(eRe)=J(R)\cap eRe $. Thus $ R $ is J-Armendariz.
\end{proof}

In \cite{a02} it is proven that a ring $ R $ is Armendariz if and only if its polynomial ring $ R[x] $ is Armendariz. 
More generally, we can get the following result.
\begin{thm}\label{theorem3}
If the ring $ R[x] $ is J-Armendariz, then $ R $ is J-Armendariz. The converse holds if  
$ J(R)[x]\subseteq J(R[x]) $.
\end{thm}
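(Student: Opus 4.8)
The plan is to prove the two implications separately, and in both cases to pass between a product of polynomials in $x$ and a product of polynomials in a new variable $y$ over the coefficient ring $R[x]$.

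For the forward direction, suppose $R[x]$ is J-Armendariz. Consider the evaluation homomorphism $\varepsilon\colon R[x]\to R$, $p(x)\mapsto p(0)$, which is a surjective ring homomorphism and therefore satisfies $\varepsilon(J(R[x]))\subseteq J(R)$. Now take $f(y)=\sum_i a_i y^i$ and $g(y)=\sum_j b_j y^j$ in $R[y]$ with $f(y)g(y)=0$. Regarding $R\subseteq R[x]$, these are polynomials in $y$ over the coefficient ring $R[x]$, and the product is still $0$ there, so J-Armendariz-ness of $R[x]$ gives $a_ib_j\in J(R[x])$ for all $i,j$. Since $a_ib_j\in R$, we get $a_ib_j=\varepsilon(a_ib_j)\in\varepsilon(J(R[x]))\subseteq J(R)$, so $R$ is J-Armendariz. (One could instead quote Amitsur's theorem that $J(R[x])\cap R$ is a nil ideal of $R$, hence contained in $J(R)$, but the evaluation map makes this unnecessary.)

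For the converse, assume $R$ is J-Armendariz and $J(R)[x]\subseteq J(R[x])$. Let $F(y)=\sum_{i=0}^n f_i(x)y^i$ and $G(y)=\sum_{j=0}^m g_j(x)y^j$ in $R[x][y]$ with $F(y)G(y)=0$; we must show $f_i(x)g_j(x)\in J(R[x])$ for all $i,j$. Choose an integer $k$ strictly larger than the degrees of all the $f_i$ and all the $g_j$, and substitute $y=x^k$, obtaining $f(x):=F(x^k)=\sum_i f_i(x)x^{ki}$ and $g(x):=G(x^k)=\sum_j g_j(x)x^{kj}$ in $R[x]$ with $f(x)g(x)=0$. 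Because $k$ exceeds every relevant degree, the exponent-blocks $f_i(x)x^{ki}$ are pairwise disjoint, so every coefficient of every $f_i$ occurs verbatim as a coefficient of $f$, and likewise every coefficient of every $g_j$ is a coefficient of $g$.

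It then follows that, for fixed $i,j$, each coefficient of the polynomial $f_i(x)g_j(x)$ is a finite sum of products of a coefficient of $f_i$ with a coefficient of $g_j$, hence a finite sum of products of a coefficient of $f$ with a coefficient of $g$; each such product lies in $J(R)$ because $R$ is J-Armendariz, and $J(R)$ is closed under addition, so every coefficient of $f_i(x)g_j(x)$ lies in $J(R)$, i.e.\ $f_i(x)g_j(x)\in J(R)[x]$. The hypothesis $J(R)[x]\subseteq J(R[x])$ now gives $f_i(x)g_j(x)\in J(R[x])$, completing the proof. The step demanding the most care is the degree bookkeeping in the substitution $y=x^k$ — ensuring the exponent blocks genuinely do not overlap so that no cancellation merges coefficients coming from different $f_i$ — together with the realization that the extra hypothesis is truly needed here, since only the reverse inclusion $J(R[x])\subseteq J(R)[x]$ holds automatically in general.
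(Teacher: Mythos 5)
Your proof is correct and follows essentially the same route as the paper: the forward direction restricts the zero product to $R\subseteq R[x]$, and the converse uses the same substitution $y=x^{t}$ with the exponent blocks kept disjoint so that the coefficients of the $f_i$ and $g_j$ reappear verbatim, then invokes $J(R)[x]\subseteq J(R[x])$. The one place you go beyond the paper is the forward direction, where your evaluation-at-$0$ argument supplies the justification for $R\cap J(R[x])\subseteq J(R)$ that the paper merely asserts.
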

\begin{proof}
Suppose that 
$ R[x] $
is a J-Armendariz ring. Let 
$ f(y)=\sum_{i=0} ^{n}a_{i}y^{i}$ and $ g(y)=\sum_{j=0} ^{m}b_{j}y^{j} $ be nonzero plynomials  $ \in R[y] $, such that 
$ f(y)g(y)=0 $. Since
$ R[x] $ is J-Armendariz and
$ R\subseteq R[x] $, we have
$ a_{i}b_{j} \in R \cap J(R[x]) $, and so 
$ R $ is J-Armendariz.
Conversely, suppose that 
$ R $ is J-Armendariz and 
$ J(R )[x] \subseteq J(R[x])$.
Let
$ F(y)=f_{0}+f_{1}y+\cdots + f_{n}y^{n} $ and 
$ G(y)=g_{0}+g_{1}y+\cdots + g_{m}y^{m}$ be polynomials in 
$ R[x][y]$, with
$ F(y)G(y)=0 $. We also let 
$  f_{i}(x)=a_{i_{0}}+a_{i_{1}}x+a_{i_{2}}x^{2}+\cdots + a_{i_{\omega_{i}}}x^{\omega_{i}}$ and
$g_{j}(x)=b_{j_{0}}+b_{j_{1}}x+b_{j_{2}}x^{2}+\cdots + b_{j_{\nu_{j}}}x^{\nu_{i}} \in R[x]  $ for each
$ 0\leq i \leq n $ and $ 0\leq j \leq m $ . Take a positive integer 
$ t $ suhc that 
$ t\geq deg(f_{0}(x))+ deg(f_{1}(x))+\cdots + deg(f_{n}(x))+deg(g_{0}(x))+deg(g_{1}(x))+\cdots +deg(g_{m}(x))$, 
where the degree is as polynomials in $ x$ and the degree of zero polynomial is taken to be $ 0 $.  
Then
$ F(x^{t})=f_{0}+f_{1}x^{t}+\cdots + f_{n}x^{tn} $ and 
$ G(x^{t})=g_{0}+g_{1}x^{t}+\cdots + g_{m}x^{tm} \in R[x]$
and the set of coefficients of the 
$ f_{i} $'s (resp. $g_{j}$'s) equals the set of coefficients of the
$F(x^{t}) $ (resp. 
$G(x^{t})  $). 
Since 
$ F(y)G(y)=0 $, then
$ F(x^{t})G(x^{t}) =0$. 
So 
$ a_{is_{i}}b_{jr_{j}}\in J(R) $, where
$ 0\leq s_{i}\leq \omega_{i} $, 
$ 0\leq r_{j}\leq \nu_{j} $.
By hypothesis we have
$ J(R)[x]\subseteq J(R[x]) $, and so
$ f_{i}g_{j}\in J(R[x]) $. It implies that
$ R $ is J-Armendariz.
\end{proof}
\begin{prop}\label{prposition3}
Let $ R $ is a J-Armendariz ring and $ S $
denotes a multiplicatively closed subset of a ring 
$ R $ consisting of central regular elements.  Let
$ S^{-1}R $ denotes the localization of 
$ R $ at $ S $. Then $ S^{-1}R $ is a J-Armendariz ring.
\end{prop}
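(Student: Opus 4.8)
The plan is to adapt the classical ``clear the denominators'' argument that handles localizations of Armendariz and of weak Armendariz rings, replacing the terminal step by one about the Jacobson radical.

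First I would fix notation. Let $F(x)=\sum_{p=0}^{n}\alpha_{p}x^{p}$ and $G(x)=\sum_{q=0}^{m}\beta_{q}x^{q}$ in $(S^{-1}R)[x]$ satisfy $F(x)G(x)=0$; the goal is $\alpha_{p}\beta_{q}\in J(S^{-1}R)$ for all $p,q$. Write $\alpha_{p}=a_{p}u_{p}^{-1}$ and $\beta_{q}=b_{q}v_{q}^{-1}$ with $a_{p},b_{q}\in R$ and $u_{p},v_{q}\in S$. Because every element of $S$ is central, the $u_{p}$ mutually commute, so $u:=u_{0}u_{1}\cdots u_{n}\in S$ is a common denominator, and using centrality one rewrites $\alpha_{p}=a_{p}'u^{-1}$ with $a_{p}':=a_{p}\prod_{k\neq p}u_{k}\in R$; likewise $\beta_{q}=b_{q}'v^{-1}$ with $b_{q}'\in R$ and $v:=v_{0}\cdots v_{m}\in S$. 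Hence $F(x)=u^{-1}f(x)$ and $G(x)=v^{-1}g(x)$ for the honest polynomials $f(x)=\sum_{p}a_{p}'x^{p}$ and $g(x)=\sum_{q}b_{q}'x^{q}$ in $R[x]$. (Equivalently, one may use $(S^{-1}R)[x]\cong S^{-1}(R[x])$, $S$ being central regular in $R[x]$ as well, and do the same bookkeeping there.)

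Next I would pull the relation back to $R[x]$. From $F(x)G(x)=0$ and centrality of $u,v$ we get $(uv)^{-1}f(x)g(x)=0$, and multiplying by the unit $uv$ gives $f(x)g(x)=0$ in $(S^{-1}R)[x]$. Since $S$ consists of regular elements, the map $R\to S^{-1}R$ is injective, hence so is $R[x]\to(S^{-1}R)[x]$; as $f(x)g(x)\in R[x]$ maps to $0$, we conclude $f(x)g(x)=0$ already in $R[x]$. Now the hypothesis that $R$ is J-Armendariz yields $a_{p}'b_{q}'\in J(R)$ for all $p,q$. Finally, using centrality once more, $\alpha_{p}\beta_{q}=a_{p}'u^{-1}b_{q}'v^{-1}=(a_{p}'b_{q}')(uv)^{-1}$ with $(uv)^{-1}$ a central unit of $S^{-1}R$, so it remains only to know that the image of $J(R)$ in $S^{-1}R$ lies in $J(S^{-1}R)$; granting that, $\alpha_{p}\beta_{q}$ is a product of an element of $J(S^{-1}R)$ with a unit, hence lies in $J(S^{-1}R)$, and $S^{-1}R$ is J-Armendariz.

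I expect this last point --- that $S^{-1}J(R)\subseteq J(S^{-1}R)$ for $S$ a central regular multiplicative set --- to be the real content, the rest being routine denominator manipulation. The natural route is: given $c\in J(R)$ and an arbitrary $\theta=w^{-1}d\in S^{-1}R$, compute $1-(c/1)\theta=w^{-1}(w-dc)$ with $dc\in J(R)$, and show that $w-dc$ becomes a unit in $S^{-1}R$; here the facts that $w$ is a central regular element of $R$ and that $w-dc$ is a unit modulo $J(R)$ must be combined, and this is the step on which I would spend the most care.
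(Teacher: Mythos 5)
Your reduction to $R[x]$ is correct and is essentially the paper's own argument: combine denominators using centrality, write $F=u^{-1}f$ and $G=v^{-1}g$ with $f,g\in R[x]$, cancel the unit $(uv)^{-1}$, use regularity of the elements of $S$ to conclude $f(x)g(x)=0$ already in $R[x]$, and deduce $a_{p}'b_{q}'\in J(R)$. You are also right that everything then hinges on the image of $J(R)$ landing inside $J(S^{-1}R)$. The genuine gap is that this deferred step is not merely delicate --- it is false for a general multiplicatively closed set of central regular elements. Take $R=\mathbb{Z}_{(p)}$ (a commutative domain, hence Armendariz, hence J-Armendariz) and $S=\{1,p,p^{2},\dots\}$: then $S^{-1}R=\mathbb{Q}$ and $J(R)=pR\neq 0$, but $J(\mathbb{Q})=0$, so $p/1$ lies in $S^{-1}J(R)$ but not in $J(S^{-1}R)$. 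Your sketched route breaks at exactly the point you anticipated: for $c\in J(R)$ and $\theta=w^{-1}d$, the element $w-dc$ need not become a unit of $S^{-1}R$ (in the example, $c=p$, $d=1$, $w=p$ gives $w-dc=0$); the element $w$ is only assumed regular, so ``$w-dc$ is a unit modulo $J(R)$'' is not available. (This example does not refute the proposition itself, since $\mathbb{Q}$ is a domain; it refutes the lemma you would need.)

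You should know that the paper's proof has the identical hole: it passes from $a_{i}b_{j}\in J(R)$ to $\alpha_{i}\beta_{j}\in J(S^{-1}R)$ with the single phrase ``It means that,'' offering no justification. So your instinct about where the real content lies is exactly right; the difference is that you flagged the obligation while the paper silently discharged it. To obtain a rigorous statement one must either add a hypothesis ensuring that the relevant products land in an ideal that provably survives localization into the Jacobson radical (for instance a nil ideal, as in the weak Armendariz setting), or verify the inclusion $S^{-1}J(R)\subseteq J(S^{-1}R)$ directly in the intended application $S=\{1,x,x^{2},\dots\}\subseteq R[x]$ used for the corollary on $R[x,x^{-1}]$; that verification is a separate argument which neither you nor the paper supplies.
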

\begin{proof}
Suppose that 
$ R $ is a J-Armendariz ring. Let
$ F(x)=\sum_{i=0}^{n} (\alpha_{i})x^{i}$ and
$ G(x)=\sum_{j=0}^{m}(\beta_{j}) x^{j}$ be nonzero polynomials in $ (S^{-1}R)[x] $ such that
$F(x)G(x)=0 $, where 
$ \alpha_{i}=a_{i}u^{-1} $,
$ \beta_{j}=b_{j}v^{-1} $, with 
$ a_{i},b_{j} \in R $ and 
$ u,v\in S $.
Since 
$ S $
is contained in the center of 
$ R $, we have 
$ F(x)G(x)= (uv)^{-1}(a_{0}+a_{1}x+a_{2}x^{2}+\cdots + a_{n}x^{n})(b_{0}+b_{1}x+b_{2}x^{2}+\cdots + b_{m}x^{m}) =0$.
Let
$  f(x)=a_{0}+a_{1}x+a_{2}x^{2}+\cdots + a_{n}x^{n}$ and
$g(x)=b_{0}+b_{1}x+b_{2}x^{2}+\cdots + b_{m}x^{m}  $. Then
$ f(x) $
and
$ g(x) $
are nonzero polynomials in
$ R[x] $ with
$ f(x)g(x)=0 $.
Since 
$ R $
is J-Armendariz, then
$ a_{i}b_{j}\in J(R) $. 
It means that 
$ \alpha_{i}\beta_{j}\in J(S^{-1}R) $, concluding that 
$ S^{-1}R $
is J-Armendariz.
\end{proof}
\begin{cor}
For a ring $ R $, if $ R[x] $ is J-Armendariz, Then $ R[x,x^{-1}] $ is J-Armendariz.
\end{cor}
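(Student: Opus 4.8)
The plan is to realize the Laurent polynomial ring $R[x,x^{-1}]$ as a central localization of $R[x]$ and then invoke Proposition~\ref{prposition3} with $R[x]$ in the role of the base ring. First I would set $S=\{1,x,x^{2},\dots\}\subseteq R[x]$. This is plainly multiplicatively closed. Every element of $S$ is central in $R[x]$, since $x$ commutes with all of $R[x]$; and every element of $S$ is a regular element of $R[x]$, because if $x^{k}f(x)=0$ (equivalently $f(x)x^{k}=0$) for some $f(x)\in R[x]$, then inspecting the lowest-degree term forces $f(x)=0$. Hence $S$ is a multiplicatively closed subset of $R[x]$ consisting of central regular elements.

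Next I would identify the localization $S^{-1}(R[x])$ with $R[x,x^{-1}]$: this is the standard description of the Laurent polynomial ring, the assignment $f(x)x^{-k}\mapsto f(x)x^{-k}$ being a ring isomorphism. Then, since $R[x]$ is J-Armendariz by hypothesis, Proposition~\ref{prposition3} applied to the ring $R[x]$ and the multiplicative set $S$ gives that $S^{-1}(R[x])=R[x,x^{-1}]$ is J-Armendariz, which is exactly the assertion. Note that Proposition~\ref{prposition3} is stated for polynomials in a fresh indeterminate with coefficients in the localized ring, i.e. it is applied to polynomials in $R[x,x^{-1}][y]$, so no conflict arises with the variable $x$ already present in $R[x]$.

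The statement is therefore essentially an immediate corollary of Proposition~\ref{prposition3}; the only points deserving a line of verification are that $x$ is a non-zero-divisor in $R[x]$ and that localizing $R[x]$ at the powers of $x$ reproduces $R[x,x^{-1}]$. Neither presents a genuine obstacle, so there is no hard step here beyond correctly setting up the localization.
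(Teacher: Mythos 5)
Your proof is correct and takes exactly the same route as the paper: localize $R[x]$ at the central regular multiplicative set $S=\{1,x,x^{2},\dots\}$, identify $S^{-1}(R[x])$ with $R[x,x^{-1}]$, and apply Proposition~\ref{prposition3}. The extra verifications you supply (that $x$ is a non-zero-divisor in $R[x]$ and that the localization really is the Laurent ring) are points the paper leaves implicit, but the argument is the same.
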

\begin{proof}
Let $ S=\lbrace 1,x,x^{2},x^{3},x^{4},\ldots\rbrace $. 
Then $ S $ is a multiplicatively closed subset of $ R[x] $ consisting of central regular elements. 
Then the proof follows from Proposition \ref{prposition3}.
\end{proof}
  \singlespacing
\small

\end{document}